\newtheorem{proposition}{Proposition}
\newtheorem{lemma}{Lemma}
\newcommand{\legendre}{\overwithdelims()}
\begin{document}
\title[The Landsberg-Schaar relation]{A proof of the Landsberg-Schaar relation by finite methods}
\author[Ben Moore]{Ben Moore}
\address{\hskip-\parindent
	School of Mathematical Sciences\\
	University of Adelaide\\ 
	SA 5005\\ 
	Australia}
\email{benjamin.moore@adelaide.edu.au}
\subjclass{11L05}
\begin{abstract}
	The Landsberg-Schaar relation is a classical identity between quadratic Gauss sums, normally used as a stepping stone to prove quadratic reciprocity. The Landsberg-Schaar relation itself is usually proved by carefully taking a limit in the functional equation for Jacobi's theta function. In this article we present a direct proof, avoiding any analysis.
\end{abstract}
\renewcommand{\subjclassname}{\textup{2010} Mathematics Subject Classification}

\maketitle

\section{Introduction}
\label{intro}
The aim of this article is to prove, using only techniques of elementary number theory, the Landsberg-Schaar relation for positive integral $a$ and $b$:
$$\frac{1}{\sqrt{a}}\sum_{n=0}^{a-1}{\exp{\left(\frac{2\pi in^2b}{a}\right)}}=\frac{1}{\sqrt{2b}}\exp{\left(\frac{\pi i}{4}\right)}\sum_{n=0}^{2b-1}{\exp{\left(-\frac{\pi i n^2a}{2b}\right)}}.$$
This relation was first discovered in 1850 by Mathias Schaar \cite{schaarproof}, who proved it using the Poisson summation formula, and proceeded to derive from it the law of quadratic reciprocity. In 1893 Georg Landsberg, apparently unaware of Schaar's work, rediscovered a slightly more general version of the relation \cite{landsbergproof}. Although Landsberg emphasises the role of modular transformations, his proof is closer in spirit to the modern one given in \cite{thetaimpliesrecip}, in which one takes a limit of the functional equation for Jacobi's theta function towards rational points on the real line.

A few remarks are in order concerning some closely related results involving techniques differing from those in the present article. Firstly, whilst this article was under review, the author noticed that in the article \cite{boylan-skoruppa}, the authors prove Hecke's generalisation of the Landsberg-Schaar identity over number fields. Their argument is elementary except for an appeal to Milgram's formula, which allows for the evaluation of exponential sums over non-degenerate integer-valued symmetric bilinear forms: the cited proof \cite[p. 127--131]{milnor-husemoller} uses Fourier analysis. When the number field is $\mathbb{Q}$, we recover the Landsberg-Schaar relation, and Milgram's formula in this instance is essentially Lemma \ref{Gauss} below. 

Secondly, the authors of \cite{boylan-skoruppa} suggest, in a parenthetical remark on the second page, that it does not seem possible to prove Hecke reciprocity by explicitly evaluating both sides. However, this does appear to work in the case of the Landsberg-Schaar relation. Indeed, in their book on Gauss and Jacobi sums \cite[Theorems 1.51, 1.52 and 1.54]{gaussbook}, Berndt, Evans and Williams give an elementary evaluation of $$\phi(a,b):=\sum_{n=0}^{a-1}{\exp{\left(\frac{2\pi in^2b}{a}\right)}}$$ for coprime positive integral $a$ and $b$. They deduce this result from Estermann's elementary evaluation \cite{estermann} of $$\phi(a,1):=\sum_{n=0}^{a-1}{\exp{\left(\frac{2\pi in^2}{a}\right)}}$$ for an odd positive integer $a$. One may then verify that $$\frac{1}{\sqrt{a}}\phi(a,b)=\frac{1}{2\sqrt{2b}}\exp{\left(\frac{\pi i}{4}\right)}\phi(4b,-a)$$ by evaluating both sides, and this equality is precisely the Landsberg-Schaar relation. In this argument, the hard work is contained in the evaluation of $\phi(a,b)$.

To emphasise the fact that the Landsberg-Schaar relation is an identity between Gauss sums, and to simplify the notation, we define, for $a$ and $b$ integers with $a>0$, $$\Phi(a,b)=\frac{1}{\sqrt{a}}\sum_{n=0}^{a-1}{\exp{\left(\frac{\pi i n^2b}{a}\right)}}.$$
Then the Landsberg-Schaar relation, for positive integral $a$ and $b$, takes the form
\begin{equation*}\label{LS}
\Phi(a,2b)=\sqrt{i}\Phi(2b,-a).
\end{equation*}

The starting point for our proof is the following evaluation of a quadratic Gauss sum, given by Gauss in 1811 \cite{gausseval}.

\begin{lemma}\label{Gauss}
	Let a be an integer, $a\geq1$. Then:
	\begin{equation*}
	\Phi(a,2)=
	\begin{cases}
	1+i & a=0 \bmod 4\\
	1 &   a=1 \bmod 4\\
	0 &   a=2 \bmod 4\\
	i &   a=3 \bmod 4.
	\end{cases}
	\end{equation*}
\end{lemma}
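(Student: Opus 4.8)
The plan is to work with the unnormalised Gauss sum $G(a)=\sqrt{a}\,\Phi(a,2)=\sum_{n=0}^{a-1}\exp(2\pi i n^2/a)$ and to reduce the evaluation for general $a$ to the case of a single odd prime modulus, treating the four residue classes modulo $4$ in turn. I would begin with the class $a\equiv 2\pmod 4$, which is the easiest: here $a/2$ is an odd integer, so the substitution $n\mapsto n+a/2$ is a fixed-point-free involution of $\mathbb{Z}/a\mathbb{Z}$ that multiplies each summand by $\exp\!\left(\pi i a/2\right)=-1$. The terms therefore cancel in pairs and $G(a)=0$, giving the third case of the lemma at once.

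For the remaining classes I would set up the multiplicative machinery. Using the Chinese Remainder Theorem, for coprime $m,n$ the map $(r,s)\mapsto nr+ms$ runs over a complete residue system modulo $mn$, and expanding the square shows
\begin{equation*}
G(mn)=\left(\sum_{r=0}^{m-1}\exp\!\left(\frac{2\pi i n r^2}{m}\right)\right)\left(\sum_{s=0}^{n-1}\exp\!\left(\frac{2\pi i m s^2}{n}\right)\right),
\end{equation*}
since the cross term contributes a trivial exponential. Writing $g_c(m)=\sum_{r=0}^{m-1}\exp(2\pi i c r^2/m)$, a standard elementary count of quadratic residues (and \emph{not} quadratic reciprocity) shows that $g_c(m)=\left(\frac{c}{m}\right)G(m)$ whenever $m$ is odd and $\gcd(c,m)=1$. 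Combined with a short direct recursion for the powers-of-two sums, starting from $\Phi(1,2)=1$ and $\Phi(4,2)=1+i$, this reduces the computation of $G(a)$ for every $a$ to the single quantity $G(p)$ with $p$ an odd prime; the Legendre symbols then reassemble to give the values $1$ and $i$ in the cases $a\equiv 1,3\pmod 4$ and the factor $1+i$ when $4\mid a$.

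The crux is therefore the evaluation of $G(p)$ for odd $p$. An elementary character-sum argument quickly yields $G(p)^2=(-1)^{(p-1)/2}p$, so $G(p)=\pm\sqrt{p}$ when $p\equiv1\pmod4$ and $G(p)=\pm i\sqrt{p}$ when $p\equiv3\pmod4$; the genuine difficulty, and the step I expect to be the main obstacle, is pinning down the plus sign in each case — the notorious sign problem that held up Gauss himself. To resolve it by finite means I would realise $G(p)$ as the trace of the matrix $S=(\zeta^{jk})_{0\le j,k<p}$ with $\zeta=\exp(2\pi i/p)$, use the identity $S^2=pP$ (with $P$ the reversal permutation matrix) to show that every eigenvalue of $S$ lies in $\{\pm\sqrt p,\pm i\sqrt p\}$ and to fix the sizes of the eigenvalue pairs from the cycle structure of $P$, and finally break the remaining ambiguity by evaluating the Vandermonde determinant $\det S=\prod_{0\le j<k<p}(\zeta^k-\zeta^j)$. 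It is exactly this determinant — equivalently, the determination of the argument of a product of differences of roots of unity — that carries the real weight; everything else in the argument is bookkeeping.
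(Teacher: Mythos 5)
Your proposal should first be measured against the fact that the paper does not actually prove Lemma \ref{Gauss} at all: it is Gauss's 1811 evaluation, taken as the starting point, with the elementary linear-algebra proof delegated to the cited reference \cite{murtyeval}. Your endgame --- realising the Gauss sum as $\mathrm{tr}\,S$ for $S=(\zeta^{jk})$, using $S^2=pP$ to confine the eigenvalues to $\{\pm\sqrt p,\pm i\sqrt p\}$ and reading off multiplicities from the cycle structure of $P$, then fixing the residual sign from the Vandermonde determinant --- is precisely that cited argument, and your pieces are individually sound: the fixed-point-free involution $n\mapsto n+a/2$ for $a\equiv 2\bmod 4$, the CRT factorisation $G(mn)=g_n(m)g_m(n)$ (correct here because the cross term is $\exp(4\pi i rs)=1$), and the extraction $g_c(m)={c\legendre m}G(m)$ by counting square roots.

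The genuine gap is in the reassembly step, and it is not bookkeeping: reducing composite odd moduli to prime ones via multiplicativity cannot deliver the stated values without quadratic reciprocity, which you explicitly disclaim. Concretely, for odd coprime $m,n$ your factorisation gives $G(mn)={n\legendre m}{m\legendre n}\,G(m)G(n)$. Take $m\equiv n\equiv 3\bmod 4$, so $mn\equiv 1\bmod 4$: the lemma claims $G(mn)=\sqrt{mn}$, while $G(m)G(n)=(i\sqrt m)(i\sqrt n)=-\sqrt{mn}$, so you need ${m\legendre n}{n\legendre m}=-1$ --- which \emph{is} quadratic reciprocity for this residue class. Indeed ``sign of $G(p)$ plus multiplicativity implies QR'' is the classical deduction, so your reduction is equivalent to importing QR, which would undercut the paper's programme (the Landsberg--Schaar relation is the traditional stepping stone \emph{to} QR). The fix is to delete the reduction entirely and run your own endgame at a general modulus: for arbitrary odd $a$ one still has $S\bar S^{\,T}=aI$ and $S^2=aP$, the eigenspace dimensions $m_1+m_2=(a+1)/2$ and $m_3+m_4=(a-1)/2$ come from the $(a-1)/2$ two-cycles and one fixed point of $P$, the elementary computation $|G(a)|^2=a$ (valid since $(2,a)=1$) replaces your prime-only identity $G(p)^2=(-1)^{(p-1)/2}p$, and the Vandermonde determinant resolves the last sign --- evaluating $G(a)$ for every $a$ in one stroke, with no multiplicativity, no Legendre symbols, and no hidden reciprocity. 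That is how the reference the paper points to proceeds, and it also spares you the separate prime-power recursion $G(p^k)=pG(p^{k-2})$ that your prime-by-prime plan would additionally require.
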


A proof of Lemma \ref{Gauss} avoiding analytical techniques may be given using linear algebra \cite{murtyeval}. Stronger results, which imply Lemma \ref{Gauss} (and Propositions \ref{phioddprimes} and \ref{phip=2} below), are also proved using elementary methods in \cite[Sections 1.3 and 1.5]{gaussbook}.

One may easily check that Lemma \ref{Gauss} is exactly the Landsberg-Schaar relation for $b=1$. Our aim is to prove the Landsberg-Schaar relation in general by induction on the number of distinct prime factors of $b$. The induction step follows from the next three results, and the bulk of this article is spent proving the third.

\begin{lemma}\label{multiplication}
	Let $a$, $b$ and $l$ be integers, $a$ positive and $(a,b)=1$. Then:
	\begin{equation*}
	\Phi(ab,l)=\Phi(a,bl)\Phi(b,al).
	\end{equation*}
\end{lemma}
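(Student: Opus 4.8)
The plan is to reparametrize the sum defining $\Phi(ab,l)$ using the Chinese Remainder Theorem, exploiting $(a,b)=1$. Writing $f(n)=\exp(\pi i l n^2/(ab))$, I would first record the bijection
$$\{0,\dots,a-1\}\times\{0,\dots,b-1\}\longrightarrow\{0,\dots,ab-1\},\qquad (n_1,n_2)\longmapsto bn_1+an_2 \pmod{ab},$$
which is injective (hence bijective by counting) because $(a,b)=1$: reducing $bn_1+an_2\equiv bn_1'+an_2'$ modulo $a$ and modulo $b$ forces $n_1\equiv n_1'$ and $n_2\equiv n_2'$. Expanding $(bn_1+an_2)^2=b^2n_1^2+2ab\,n_1n_2+a^2n_2^2$ and dividing by $ab$, the cross term contributes $\exp(2\pi i l n_1 n_2)=1$, so that each summand factors as $\exp(\pi i (bl)n_1^2/a)\exp(\pi i (al)n_2^2/b)$. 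Summing over the product range then separates into $\sqrt{a}\,\Phi(a,bl)\cdot\sqrt{b}\,\Phi(b,al)$, and dividing by $\sqrt{ab}$ gives the claim.

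The step that needs care --- and the main obstacle --- is that the summand $f$ is not periodic modulo $ab$. Indeed $f(n+ab)=(-1)^{lab}f(n)$, so $f$ has period $2ab$ rather than $ab$, and the reparametrization above is legitimate only when the representatives $bn_1+an_2$ may be reduced modulo $ab$ without changing the value of $f$, that is, exactly when $lab$ is even. Since $(a,b)=1$, this covers every case except $a$, $b$ and $l$ all odd, so I would split into two cases.

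In the even case ($lab$ even) the argument above goes through verbatim. For the remaining case, with $a$, $b$ and $l$ all odd, I would instead evaluate both sides directly by a pairing argument. Since $lab$ is odd and $\exp(-2\pi i l n)=1$, one checks $f(ab-n)=-f(n)$; as $ab$ is odd the involution $n\mapsto ab-n$ pairs $\{1,\dots,ab-1\}$ with no fixed point, so all these terms cancel and $\sqrt{ab}\,\Phi(ab,l)=f(0)=1$. The identical computation applied to $\Phi(a,bl)$ and $\Phi(b,al)$ --- whose second arguments $bl$ and $al$ are again odd --- gives $\Phi(a,bl)=1/\sqrt{a}$ and $\Phi(b,al)=1/\sqrt{b}$, so both sides equal $1/\sqrt{ab}$ and the identity holds. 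The only genuine subtlety is thus the parity bookkeeping forced by the $\pi i$ (rather than $2\pi i$) in the exponent; the multiplicativity itself is the standard CRT splitting of a Gauss sum.
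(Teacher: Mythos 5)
Your proof is correct, but it handles the crucial case by a genuinely different route than the paper. For $lab$ even your argument is essentially the paper's: the same CRT parametrization (there written $as+bt$), the same observation that the cross term is trivial, and the same identification of the obstruction, namely the sign $(-1)^{lab}$ incurred when representatives are reduced modulo $ab$ because the summand has period $2ab$ rather than $ab$. The divergence is in the remaining case with $a$, $b$, $l$ all odd. The paper retains the factorization and encodes the discrepancy as a prefactor $\epsilon=(-1)^S$ with $S=\#\{(s,t)\mid as+bt>ab\}$, then shows $S=\frac{(a-1)(b-1)}{2}$ is even via Sylvester's palindromic-polynomial puzzle; you instead abandon the factorization there and evaluate everything in sight: for $m$ and $j$ both odd, the fixed-point-free involution $n\mapsto m-n$ on $\{1,\dots,m-1\}$ satisfies $f(m-n)=-f(n)$, so $\sum_{n=0}^{m-1}\exp(\pi i j n^2/m)=1$, whence $\Phi(ab,l)$, $\Phi(a,bl)$ and $\Phi(b,al)$ are all computed exactly and both sides equal $1/\sqrt{ab}$. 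Your route buys robustness: the paper's step compresses the per-term signs $(-1)^{k(s,t)}$ into a single global $\epsilon$, and the evenness of $S$ alone does not literally justify that compression (one needs the sign-flipped terms to cancel, not merely to be even in number), whereas your direct evaluation sidesteps that bookkeeping entirely and yields the explicit value $\Phi(m,j)=m^{-1/2}$ for odd $m$, $j$ as a by-product. What the paper's approach buys in exchange is a single uniform factorization statement covering all parities, together with an appealing combinatorial identity; yours is shorter and fully self-contained in the one delicate case.
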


The proof is not difficult, but is hard to find in this form: usually $l$ is assumed to be even, which simplifies matters considerably.

\begin{proof}
	As $s$ runs from $0$ to $b-1$ and $t$ runs from $0$ to $a-1$, $as+bt$ runs through a complete system of representatives for elements of $\mathbb{Z}/ab\mathbb{Z}$. So $$(as+bt)^2=g^2+2gkab+k^2a^2b^2$$
	for $k=0$ or $1$, $0\leq g <ab$. It follows that:
	$$\Phi(ab,l)=\frac{1}{\sqrt{ab}}\sum_{n=0}^{ab-1}{\exp{\left(\frac{\pi i n^2l}{ab}\right)}}=\epsilon\Phi(a,bl)\Phi(b,al),$$
	where
	\begin{equation*}
	\epsilon=\begin{cases}
	1 & a\text{ or } b \text{ even}\\
	{(-1)}^{S} & a\text{ and } b \text{ both odd,}
	\end{cases}
	\end{equation*}
	and $$S=\#\{(s,t) \mid as+bt>ab\}.$$
	
	The value of $S$ is $\frac{(a-1)(b-1)}{2}$ -- the problem of determining $S$ was set as a puzzle by Sylvester in \cite{sylvester} and solved by W. J. Curran Sharp in the same volume. The solution runs as follows: define $$P(x)=(1+x^b+x^{2b}+\dots+x^{ab})(1+x^a+x^{2a}+\dots+x^{ba}),$$ and note that $$P(x)=1+\dots+2x^{ab}+\dots+x^{2ab},$$ where the first dots comprise one term $x^g$ for each $g$ of the form $as+tb$ (we know that the coefficient of $x^g$ is $1$ since $a$ and $b$ are coprime).
	
	Since each factor of $P$ is a palindromic polynomial, so too is $P$, and it follows that the second dots comprise the same number of terms, all of coefficient 1. Therefore, $$(1+a)(1+b)=P(1)=4+2\#\{g<ab \mid g=as+tb\}.$$ Using the fact that $$\#\{g<ab \mid g=as+tb\}=(ab-1)-S,$$ the claim follows. So if $a$ and $b$ are both odd, then $S$ is even, and $\epsilon=1$ in this case too.
\end{proof}

The following result will not be needed until Section \ref{induction}.
\begin{lemma}\label{kfactor}
	Suppose $a$, $b$ and $k$ are nonzero integers, $a$ and $k$ are positive, and at least one of $a$ or $b$ is even. Then
	\begin{equation*}
	\Phi(ka,kb)=\sqrt{k}\Phi(a,b).
	\end{equation*}
\end{lemma}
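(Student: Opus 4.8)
The plan is to unwind the definition of $\Phi$ on both sides and reduce the claimed identity to a statement about the summand alone. Writing out $\Phi(ka,kb)$, the factor $k$ cancels inside the exponential, giving
$$\Phi(ka,kb)=\frac{1}{\sqrt{ka}}\sum_{n=0}^{ka-1}\exp\left(\frac{\pi i n^2 b}{a}\right),$$
whereas $\sqrt{k}\,\Phi(a,b)=\frac{\sqrt{k}}{\sqrt{a}}\sum_{n=0}^{a-1}\exp(\pi i n^2 b/a)$. Since $\sqrt{ka}=\sqrt{k}\sqrt{a}$, the desired equality is equivalent to
$$\sum_{n=0}^{ka-1}\exp\left(\frac{\pi i n^2 b}{a}\right)=k\sum_{n=0}^{a-1}\exp\left(\frac{\pi i n^2 b}{a}\right).$$

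First I would observe that the right-hand side is exactly what one obtains from the left by summing $k$ identical blocks of $a$ consecutive terms, so it suffices to prove that the summand $f(n)=\exp(\pi i n^2 b/a)$ is periodic in $n$ with period $a$. The key computation is to expand $(n+a)^2=n^2+2na+a^2$ and thereby obtain
$$f(n+a)=f(n)\exp(2\pi i n b)\exp(\pi i a b).$$
Here $\exp(2\pi i n b)=1$ because $nb$ is an integer, so that $f(n+a)=f(n)(-1)^{ab}$.

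The crux — and the only place the hypothesis is used — is the parity of $ab$: periodicity holds precisely when $(-1)^{ab}=1$, that is, when $ab$ is even, which is exactly the assumption that at least one of $a$ or $b$ is even. Granting this, $f$ has period $a$, and splitting the range $0\le n<ka$ into $k$ successive intervals of length $a$ and reindexing yields the displayed identity, completing the proof. I do not anticipate a genuine obstacle: the entire content is that evenness of $ab$ makes $f$ well defined modulo $a$, whereas for $a$ and $b$ both odd the sign $(-1)^{ab}=-1$ would break periodicity and the identity would fail.
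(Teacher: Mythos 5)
Your proof is correct and takes essentially the same route as the paper: the paper reindexes the sum as $n+am$ with $0\le n<a$, $0\le m<k$ and expands $(n+am)^2$, picking up the factor $\exp(i\pi abm^2)=(-1)^{abm^2}=1$, which is precisely your one-step periodicity observation $f(n+a)=f(n)(-1)^{ab}$ together with the evenness of $ab$. Your identification of where the hypothesis is used (and that the identity fails for $a,b$ both odd) matches the paper's argument exactly.
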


\begin{proof}
	\begin{align*}
	\Phi(ka,kb)&=\frac{1}{\sqrt{ka}}\sum_{n=0}^{ka-1}{\exp{\left(\frac{\pi i n^2b}{a}\right)}}\\
	&=\frac{1}{\sqrt{k}}\sum_{m=0}^{k-1}{\frac{1}{\sqrt{a}}\sum_{n=0}^{a-1}{\exp{\left(\frac{\pi i (n+am)^2 b}{a}\right)}}}\\
	&=\frac{1}{\sqrt{k}}\sum_{m=0}^{k-1}{\exp{(i\pi abm^2)}}\Phi(a,b)\\
	&=\sqrt{k}\Phi(a,b).
	\end{align*}
\end{proof}

At this point, we only need one more result to prove the Landsberg-Schaar relation in Section \ref{induction}.
\begin{proposition}\label{reflection}
	Let $p$ be a prime and $l$ an integer with $(p,l)=1$. Then:	
	\begin{equation*}
	\Phi(p^k,2l)\Phi(p^k,-2l)=
	\begin{cases}
	1& p \text{ an odd prime, }k\geq 1\\
	2& p=2 \text{, }k\geq 3.
	\end{cases}
	\end{equation*}
\end{proposition}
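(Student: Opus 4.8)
The plan is to exploit the fact that the two factors are complex conjugates of one another. Since $l$ is an integer, replacing $l$ by $-l$ in the definition of $\Phi$ conjugates each summand, so that $\Phi(p^k,-2l)=\overline{\Phi(p^k,2l)}$ and the product in question is simply $|\Phi(p^k,2l)|^2$. This reduces the proposition to evaluating the squared modulus of a single Gauss sum, which I would attack by the standard ``difference'' trick rather than by computing $\Phi(p^k,2l)$ explicitly.

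Writing $q=p^k$, I would first expand
\begin{equation*}
|\Phi(q,2l)|^2=\frac{1}{q}\sum_{m=0}^{q-1}\sum_{n=0}^{q-1}\exp\left(\frac{2\pi i l(m^2-n^2)}{q}\right),
\end{equation*}
and then substitute $m=n+h$, with $h$ ranging over $\mathbb{Z}/q\mathbb{Z}$, using $m^2-n^2=2nh+h^2$. The sum factors as
\begin{equation*}
|\Phi(q,2l)|^2=\frac{1}{q}\sum_{h=0}^{q-1}\exp\left(\frac{2\pi i l h^2}{q}\right)\sum_{n=0}^{q-1}\exp\left(\frac{2\pi i (2lh) n}{q}\right),
\end{equation*}
and the inner geometric sum over $n$ contributes $q$ when $q\mid 2lh$ and $0$ otherwise. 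Because $(l,p)=1$, the integer $l$ is a unit modulo $q$, so the condition $q\mid 2lh$ simplifies to $q\mid 2h$.

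It then remains only to enumerate the surviving values of $h$ and to check the phase $\exp(2\pi i l h^2/q)$ at each. When $p$ is odd, $q$ is odd, so $q\mid 2h$ forces $q\mid h$ and hence $h=0$; the single surviving term is $q$, giving $|\Phi(q,2l)|^2=1$. When $p=2$ the condition becomes $2^{k-1}\mid h$, so the surviving values are $h=0$ and $h=2^{k-1}$; at $h=2^{k-1}$ one has $h^2/q=2^{k-2}$, an integer once $k\geq 2$, so the phase is $1$ and each of the two terms contributes $q$, giving $|\Phi(q,2l)|^2=2$.

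The main obstacle, such as it is, is the bookkeeping in the case $p=2$: one must verify that $l h^2/q$ is an integer at $h=2^{k-1}$ (which is where the hypothesis on $k$ enters) so that the second term carries phase $1$ rather than a nontrivial root of unity. In fact the argument succeeds already for $k\geq 2$; the stated restriction $k\geq 3$ is stronger than the evaluation itself requires, and is presumably the range actually needed in the induction of Section \ref{induction}.
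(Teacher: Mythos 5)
Your proof is correct, but it is a genuinely different and considerably more economical route than the paper's. The paper obtains Proposition \ref{reflection} as a corollary of explicit evaluations of $\Phi(p^k,2l)$ (Propositions \ref{phioddprimes} and \ref{phip=2}), which in turn rest on the solution counts for $x^2=a \bmod p^k$ developed in Section \ref{counting}; the two displayed values are then just multiplied out, using Lemma \ref{Gauss} to compute $\Phi(p^k,2)^2$ in the odd case. You instead observe that $\Phi(p^k,-2l)=\overline{\Phi(p^k,2l)}$, so the product is $|\Phi(p^k,2l)|^2$, and evaluate the squared modulus by the classical shift $m=n+h$, where the inner geometric sum over $n$ kills every $h$ with $q\nmid 2lh$; your bookkeeping is sound throughout (the summand is periodic in $h$ with period $q$, so $h$ may be taken over a complete residue system, and coprimality of $l$ to $q$ correctly reduces the survival condition to $q\mid 2h$), and your phase check at $h=2^{k-1}$ is right, so you even get the $p=2$ case for all $k\geq 2$, with $k=1$ correctly yielding $0$. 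Your closing remark is also accurate: the restriction $k\geq 3$ in the statement is an artifact of Proposition \ref{phip=2} (whose proof needs $j\geq 3$ in Lemma \ref{evenpcoprimek}), and the induction in Section \ref{induction} only ever invokes the proposition with exponent $k+2\geq 3$. The trade-off between the two approaches is worth noting: since Section \ref{induction} uses nothing from Sections \ref{counting} and \ref{eval} except Proposition \ref{reflection} itself, your argument would let the Landsberg--Schaar relation be derived from Lemmas \ref{Gauss}, \ref{multiplication} and \ref{kfactor} alone, substantially shortening the paper; what the paper's longer route buys is the explicit evaluations of $\Phi(p^k,2l)$, which are of independent interest as elementary special cases of Gauss's evaluation.
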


The next two sections are devoted to proving Proposition \ref{reflection}, which is achieved by computing $\Phi(p^k,2l)$ directly. All the results of the next two sections are well-known in the literature, though apparently not all collected in one place. In particular, Proposition \ref{phioddprimes} and Proposition \ref{phip=2} are special cases of Gauss' evaluation of $\Phi(a,2)$, and may be found in \cite{gaussbook} as mentioned above. The proof of each proposition requires one to know the number of solutions to $x^2=a \bmod p^k$ for each $a$, which is the subject of the next section.

\subsection*{Acknowledgements} The author is extremely grateful to Mike Eastwood for his support and encouragement concerning this article, and most especially for his firm belief that an elementary proof of the Landsberg--Schaar relation should exist! The author would also like to thank Bruce Berndt for reading an earlier draft, Ram Murty for some encouraging remarks, David Roberts for tracking down Gauss' original evaluation of his eponymous sums, and the anonymous referee for suggesting valuable improvements to the article.

\section{Counting solutions to $x^2=a \bmod p^k$}
\label{counting}

The first result is reminiscent of Hensel's lemma, but is more direct.
\begin{lemma}\label{inductioncong}
	Let $p$ be a prime, not necessarily odd, and $j>i$.
	\begin{equation*}
	\#\{x \mid x^2=kp^i \bmod p^j\}=
	\begin{cases}
	p^{i/2}\#\{x|x^2=k \bmod p^{j-i}\} & i \text{ even}\\
	0 & \hspace{-0.5cm}i \text{ odd, } (k,p)=1.
	\end{cases}
	\end{equation*}
\end{lemma}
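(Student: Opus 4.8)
The plan is to reduce everything to elementary $p$-adic valuation estimates together with a single substitution. Throughout, write $v_p(n)$ for the exact power of $p$ dividing a nonzero integer $n$, and let $x$ range over residues $0 \le x < p^j$. The first observation, common to both cases, is that any solution $x$ of $x^2 \equiv kp^i \pmod{p^j}$ satisfies $v_p(x^2) \geq i$: indeed, if we had $v_p(x^2) < i$, then since $v_p(kp^i) \geq i$ the difference $x^2 - kp^i$ would have valuation $v_p(x^2) < i < j$, contradicting the fact that $p^j \mid x^2 - kp^i$.

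For the odd case, I would combine this with the reverse inequality. Assuming $(k,p) = 1$, so that $v_p(kp^i) = i$ exactly, if $v_p(x^2) > i$ then $x^2 - kp^i$ has valuation exactly $i < j$, again contradicting divisibility by $p^j$. Hence any solution forces $v_p(x^2) = i$. But $v_p(x^2) = 2v_p(x)$ is even while $i$ is odd, so no solution can exist, and the count is $0$.

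For the even case, write $i = 2m$, so that $p^{i/2} = p^m$. The inequality $v_p(x^2) \geq i = 2m$ gives $v_p(x) \geq m$, so every solution is of the form $x = p^m z$; crucially, this uses no coprimality hypothesis on $k$. Substituting, the congruence $x^2 \equiv kp^i \pmod{p^j}$ becomes $p^{2m}z^2 \equiv kp^{2m} \pmod{p^j}$, which after cancelling $p^{2m} = p^i$ is equivalent to $z^2 \equiv k \pmod{p^{j-i}}$. As $x$ runs over $0 \le x < p^j$ with $p^m \mid x$, the variable $z = x/p^m$ runs over $0 \le z < p^{j-m}$.

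The one point requiring care — and the step I expect to be the main obstacle to getting the constant right — is this final count. The condition on $z$ depends only on $z \bmod p^{j-i}$, but $z$ ranges over the larger interval $[0, p^{j-m})$. Since $j - m = (j-i) + m$, this interval consists of exactly $p^m$ consecutive blocks of length $p^{j-i}$, so each residue class modulo $p^{j-i}$ is hit precisely $p^m$ times. Therefore the number of admissible $z$ equals $p^m \#\{z \mid z^2 = k \bmod p^{j-i}\}$, which is exactly $p^{i/2}\#\{x \mid x^2 = k \bmod p^{j-i}\}$, completing the argument.
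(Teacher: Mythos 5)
Your proof is correct and takes essentially the same approach as the paper: your substitution $x = p^{i/2}z$ together with the block count of $z$ over $[0,p^{j-i/2})$ is exactly the paper's map from solutions mod $p^j$ to solutions mod $p^{j-i}$ given by dividing by $p^{i/2}$, shown surjective with every fibre of size $p^{i/2}$, and your valuation-parity argument in the odd case matches the paper's observation that $x^2 \equiv kp^i \pmod{p^j}$ with $i$ odd forces $p \mid k$. If anything, your phrasing of the even case as a clean equivalence ($x^2\equiv kp^i \pmod{p^j}$ iff $z^2\equiv k\pmod{p^{j-i}}$) slightly streamlines the paper's direct verification that all elements of a fibre square to the same residue, but the underlying argument is identical.
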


\begin{proof}
	To dispose of the case where $i$ is odd, note that $$x^2=kp^i+lp^j$$ implies $p^i$ divides $x$, so $p$ divides $k$. Now suppose that $i$ is even. Define 
	\begin{align*}
	A&=\{x \in \mathbb{Z}/p^j\mathbb{Z} \mid x^2=kp^i \bmod p^j\}\\
	B&=\{y \in \mathbb{Z}/p^{j-i}\mathbb{Z} \mid y^2=k \bmod p^{j-i}\}.
	\end{align*} The map $F:A\rightarrow B$ by $x\mapsto p^{i/2} x$ is surjective, so to prove that $|A|=p^{i/2}|B|$, we need only show that each fibre of $F$ has cardinality $p^{i/2}$.
	Since $F(x)=F(y)$ if and only if $x=y+tp^{j-i/2}$, the fibre of $F(x)$ contains nothing more than the elements $x_t=x+tp^{j-i/2}$ for $t=0,\dots,p^{i/2}-1$. But since $p^{i/2}$ divides $x$, 
	\begin{align*}
	{(x_{t})}^2&=x^2+2xtp^{j-i/2}+t^2p^{j+j-i/2} \bmod p^j\\
	&=x^2+2stp^{i/2}p^{j-i/2} \bmod p^j\\
	&=x^2 \bmod p^j.
	\end{align*} So the fibre of $F(x)$ is exactly the $x_t$. 
\end{proof}

We can now count the solutions to $x^2=0\bmod p^k$.
\begin{lemma}\label{zeros}
	\begin{equation*}
	\#\{x|x^2=0\bmod p^k\}=
	\begin{cases}
	p^{k/2} & k \text{ even}\\
	p^{(k-1)/2} & k \text{ odd}.
	\end{cases}
	\end{equation*}
\end{lemma}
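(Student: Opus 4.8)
The plan is to reduce the count directly to Lemma \ref{inductioncong}, applied with the coefficient taken to be $0$. Writing $N(m) = \#\{x \mid x^2 = 0 \bmod p^m\}$, the claim to be proved is that $N(k) = p^{\lfloor k/2\rfloor}$, which matches $p^{k/2}$ for even $k$ and $p^{(k-1)/2}$ for odd $k$. The observation that makes the previous lemma directly applicable is that $0 = 0\cdot p^i$ for any $i$, and that the even case of Lemma \ref{inductioncong} imposes no coprimality condition on the coefficient; so taking the coefficient equal to $0$ is legitimate and gives $N(k) = p^{i/2}\,N(k-i)$ whenever $i$ is even and $i<k$.

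From here I would simply peel off the largest convenient even power of $p$ strictly below $k$. For $k$ even I take $i = k-2$, obtaining $N(k) = p^{(k-2)/2}\,N(2)$; for $k$ odd I take $i = k-1$, obtaining $N(k) = p^{(k-1)/2}\,N(1)$. It then remains only to compute the two base cases, both of which are immediate: $x^2 \equiv 0 \bmod p$ forces $p \mid x$, so the only residue is $0$ and $N(1) = 1$; and $x^2 \equiv 0 \bmod p^2$ likewise forces $p \mid x$, so the solutions modulo $p^2$ are precisely the $p$ multiples of $p$, giving $N(2) = p$. Substituting yields $p^{(k-2)/2}\cdot p = p^{k/2}$ in the even case and $p^{(k-1)/2}\cdot 1 = p^{(k-1)/2}$ in the odd case, exactly as required. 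The small values $k=1,2$ (where $i=0$) are covered by the same formulas.

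There is no real obstacle here; the only thing to be careful about is the parity bookkeeping, namely choosing $i$ even and leaving a residual modulus of $p$ or $p^2$ so that the base case is trivial. As an independent sanity check one can instead argue purely $p$-adically: $x^2 \equiv 0 \bmod p^k$ is equivalent to $2v_p(x) \geq k$, i.e. $v_p(x) \geq \lceil k/2\rceil$, so the solutions in $\{0,\dots,p^k-1\}$ are precisely the multiples of $p^{\lceil k/2\rceil}$, of which there are $p^{\,k-\lceil k/2\rceil} = p^{\lfloor k/2\rfloor}$, recovering both cases at once.
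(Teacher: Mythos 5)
Your proof is correct and takes essentially the same route as the paper: the paper also applies Lemma \ref{inductioncong} with $j=k$, $i=k-2$ for even $k$ and $i=k-1$ for odd $k$, reducing to the same base cases $\#\{x \mid x^2=0 \bmod p^2\}=p$ and $\#\{x \mid x^2=0 \bmod p\}=1$. Your closing $p$-adic valuation argument is a valid independent check, but the main argument is the paper's proof verbatim in substance.
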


\begin{proof}
	For $k$ even, put $j=k$, $i=k-2$ in Lemma \ref{inductioncong}. Then $$\#\{x|x^2=0\bmod p^k\}=p^{(k-2)/2}\#\{x|x^2=0\bmod p^2\},$$ and $$\{x|x^2=0\bmod p^2\}=\{0,p,2p,\dots,(p-1)p\}.$$
	For $k$ odd, put $j=k$, $i=k-1$ in Lemma \ref{inductioncong} to obtain $$\#\{x|x^2=0\bmod p^k\}=p^{(k-1)/2}\#\{x|x^2=0\bmod p\}=p^{(k-1)/2}.$$ 
\end{proof}

The next two results are standard: one may consult Hecke \cite[p. 47, Theorems 46a and 47]{hecke} or Dickson \cite[p. 13, Theorem 17]{dickson}.
\begin{lemma}\label{oddpcoprimek}
	Let $p$ be an odd prime, $j \geq 1$, $(k,p)=1$, and write ${k\legendre p}$ for the Legendre symbol. Then
	$$\#\{x \mid x^2=k \bmod p^j\}=1+{k\legendre p}.$$
\end{lemma}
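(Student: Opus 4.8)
The plan is to induct on $j$, proving that the number of solutions to $x^2=k \bmod p^j$ is independent of $j$ when $k$ is coprime to $p$; the common value $1+{k\legendre p}$ is then fixed by the base case $j=1$, where the statement is essentially the definition of the Legendre symbol. The mechanism for the induction is a Hensel-style lifting argument, and oddness of $p$ will enter in exactly one place.

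For the base case I would work in the field $\mathbb{Z}/p\mathbb{Z}$. If ${k\legendre p}=-1$ then $k$ is a quadratic nonresidue, so there are no solutions, matching $1+{k\legendre p}=0$. If ${k\legendre p}=1$ then there is at least one solution $x_0$; since $(k,p)=1$ forces $x_0\not\equiv 0$ and $p$ is odd, the roots $x_0$ and $-x_0$ are distinct, and as a quadratic over a field has at most two roots there are exactly two, matching $1+{k\legendre p}=2$.

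For the inductive step I would show that reduction modulo $p^j$ gives a bijection from the solutions mod $p^{j+1}$ to the solutions mod $p^j$, so the two counts agree. Given $x_j$ with $x_j^2=k+mp^j$, I would seek lifts of the form $x_j+tp^j$ with $t\in\{0,\dots,p-1\}$; since $2j\geq j+1$ for $j\geq 1$,
\begin{equation*}
(x_j+tp^j)^2\equiv x_j^2+2x_jtp^j\equiv k+(m+2x_jt)p^j \pmod{p^{j+1}},
\end{equation*}
so $x_j+tp^j$ is a solution precisely when $m+2x_jt\equiv 0 \bmod p$. The key point, and the only place oddness of $p$ is used, is that $2x_j$ is a unit modulo $p$ (indeed $p\nmid x_j$ because $x_j^2\equiv k$ with $(k,p)=1$), so this congruence determines $t$ uniquely mod $p$. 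Since any solution mod $p^{j+1}$ reduces to a solution mod $p^j$ and is of the form $x_j+tp^j$ for a unique $t$, the reduction map is surjective with singleton fibres, hence a bijection. The main thing to verify carefully is precisely this bijection, after which induction completes the proof.
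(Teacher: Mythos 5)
Your proof is correct, but there is no in-paper argument to compare it against: the paper explicitly treats this lemma as standard and simply cites Hecke and Dickson for it, so your proposal supplies a proof where the paper outsources one. The Hensel-style lifting argument you give is the classical proof one finds in those references, and every step checks out: the base case correctly uses that $(k,p)=1$ and oddness of $p$ make $x_0$ and $-x_0$ distinct, while the field structure of $\mathbb{Z}/p\mathbb{Z}$ caps the root count at two; and in the inductive step you correctly note that $p\nmid x_j$ (since $x_j^2\equiv k \bmod p^j$ with $(k,p)=1$) and that oddness of $p$ makes $2x_j$ invertible, so the congruence $m+2x_jt\equiv 0 \bmod p$ has a unique solution $t$, giving singleton fibres and hence a count independent of $j$. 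It is worth observing that your argument harmonises with the paper's own toolkit: Lemma \ref{inductioncong} uses exactly the same fibre-counting mechanism for the complementary case where the right-hand side is divisible by $p$, with the difference that there the fibres have size $p^{i/2}$, whereas coprimality forces unique lifting here. In that sense your proof makes the paper self-contained at the one point where it leans on external references.
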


\begin{lemma}\label{evenpcoprimek}
	For $p=2$ and $(k,2)=1$:
	\begin{equation*}
	\#\{x\mid x^2=k \bmod 4\}=
	\begin{cases}
	2 & k=1 \bmod 4\\
	0 & k=3 \bmod 4.
	\end{cases}
	\end{equation*}
	For $j\geq 3$:
	\begin{equation*}
	\#\{x\mid x^2=k \bmod 2^j\}=
	\begin{cases}
	4 & k=1 \bmod 8\\
	0 & \text{otherwise.}
	\end{cases}
	\end{equation*}	
\end{lemma}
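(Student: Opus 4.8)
The plan is to handle the two moduli separately, and in each case to dispose of the vanishing alternative immediately using the elementary fact that every odd square is congruent to $1$ modulo $8$: writing $x=2m+1$ gives $x^2=4m(m+1)+1$, and $m(m+1)$ is even. Since $k$ is odd, any solution $x$ of $x^2\equiv k$ must itself be odd, whence $k\equiv x^2\equiv 1\pmod 8$. Thus modulo $4$ there can be no solution unless $k\equiv 1\pmod 4$, and modulo $2^j$ with $j\geq 3$ there can be none unless $k\equiv 1\pmod 8$; this settles every ``$0$'' entry in the statement. For the modulo $4$ claim it then remains only to note that the two odd residues $1$ and $3$ both square to $1$ modulo $4$, so when $k\equiv 1\pmod 4$ there are exactly the two solutions $x=1,3$. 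All the content therefore lies in showing that, for $j\geq 3$ and $k\equiv 1\pmod 8$, the congruence $x^2\equiv k\pmod{2^j}$ has exactly four solutions.

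First I would establish existence of at least one solution by induction on $j$. The case $j=3$ is witnessed by $x=1$, since $k\equiv 1\pmod 8$. For the inductive step, suppose $a$ is odd with $a^2=k+2^jc$. Because $j\geq 3$ we have $2j-2\geq j+1$, so $(a+2^{j-1})^2\equiv a^2+a\,2^j\pmod{2^{j+1}}$; hence the two candidates $a$ and $a+2^{j-1}$ satisfy $a^2\equiv k+2^jc$ and $(a+2^{j-1})^2\equiv k+2^j(c+a)\pmod{2^{j+1}}$. As $a$ is odd, exactly one of $c$ and $c+a$ is even, so precisely one of these two lifts is a solution modulo $2^{j+1}$, completing the induction. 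This refined lift, correcting by $2^{j-1}$ rather than by $2^j$, is what replaces the naive Hensel argument, which fails at $p=2$ because the derivative $2x$ is never a unit.

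It then remains to show that a single solution $a$ generates exactly four. I would first check that $\pm a$ and $\pm a+2^{j-1}$ are all solutions modulo $2^j$ (for the shifted pair this uses $2^{2j-2}\equiv 0\pmod{2^j}$), and that they are pairwise distinct, using that $a$ is odd and $j\geq 3$. For exhaustiveness, suppose $x$ is any solution; then $2^j$ divides $(x-a)(x+a)$. Both factors are even, and their difference is $2a\equiv 2\pmod 4$, so they cannot both be divisible by $4$: one of them is divisible by $2$ exactly once, forcing the other to be divisible by $2^{j-1}$. According as $2^{j-1}$ divides $x-a$ or $x+a$, we get $x\equiv a$ or $x\equiv -a\pmod{2^{j-1}}$, and lifting back to modulus $2^j$ recovers precisely the four residues already found. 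The delicate point, and the main obstacle, is exactly this valuation argument: it encodes the fact peculiar to $p=2$ that squaring is four-to-one on odd residues modulo $2^j$ for $j\geq 3$, in contrast to the two-to-one behaviour at odd primes recorded in Lemma \ref{oddpcoprimek}.
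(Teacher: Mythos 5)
Your proof is correct, and it is worth noting that the paper itself offers no proof of this lemma at all: it is dismissed as standard, with citations to Hecke and Dickson, so you have supplied in full an argument the paper deliberately outsources. Your route is the classical one in substance but organised cleanly: the observation that every odd square is $1 \bmod 8$ kills all the zero cases at once; the modified lift by $2^{j-1}$ (rather than $2^j$) correctly repairs Hensel's lemma at $p=2$, since $(a+2^{j-1})^2 \equiv a^2 + a2^j \pmod{2^{j+1}}$ for $j\geq 3$ and the parity of $c$ versus $c+a$ guarantees exactly one good lift; and the $2$-adic valuation argument on $(x-a)(x+a)$ -- one factor has valuation exactly $1$ because their difference $2a$ does -- is precisely what establishes the four-to-one behaviour of squaring on odd residues, in contrast to the two-to-one count of Lemma \ref{oddpcoprimek}. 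An alternative standard route, which the cited references essentially follow, is via the group structure $(\mathbb{Z}/2^j\mathbb{Z})^\times \cong \mathbb{Z}/2 \times \mathbb{Z}/2^{j-2}$, from which the index of the subgroup of squares is read off as $4$; your argument trades that structural machinery for direct congruence manipulation, which fits the elementary spirit of the paper better, at the cost of a slightly longer verification of distinctness and exhaustiveness. All steps check out, including the easily overlooked requirement that existence of at least one solution when $k \equiv 1 \pmod 8$ must be proved before the four-to-one count yields exactly four solutions.
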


Lemma \ref{inductioncong} and Lemma \ref{oddpcoprimek} taken together give us a complete picture for odd $p$ when $k \neq 0$, as follows.

\begin{lemma}\label{oddpcong}
	For $(k,p)=1$, $j>i$ and $p$ an odd prime:
	\begin{equation*}
	\#\{x\mid x^2=kp^i \bmod p^j\}=
	\begin{cases}
	p^{i/2}\left(1+{k\legendre p}\right) & i \text{ even}\\
	0 & i \text{ odd.}
	\end{cases}
	\end{equation*}
\end{lemma}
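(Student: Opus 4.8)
The plan is to simply combine the two preceding lemmas, handling the two parities of $i$ separately. First I would dispose of the case where $i$ is odd: since $(k,p)=1$ by hypothesis, the second branch of Lemma \ref{inductioncong} applies directly and gives that the count is $0$, matching the claimed value.

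For $i$ even, Lemma \ref{inductioncong} reduces the count modulo $p^j$ to $p^{i/2}$ times the count of solutions to $x^2 = k \bmod p^{j-i}$. The point to check here is that the hypotheses of Lemma \ref{oddpcoprimek} are genuinely met for this inner count: I need the modulus exponent $j-i$ to be at least $1$, which is exactly where the standing assumption $j > i$ is used, and I need $(k,p)=1$, which is given. Lemma \ref{oddpcoprimek} then evaluates the inner count as $1 + {k\legendre p}$, a value independent of the exponent $j-i$, and multiplying by $p^{i/2}$ yields precisely $p^{i/2}\!\left(1 + {k\legendre p}\right)$.

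I do not anticipate any real obstacle, since all the substance is already contained in Lemmas \ref{inductioncong} and \ref{oddpcoprimek}, and the present lemma is essentially their conjunction. The only things worth stating carefully are that $p^{i/2}$ is an integer power precisely because we are in the even branch, and that $j>i$ is what guarantees a strictly positive exponent in the reduced congruence so that Lemma \ref{oddpcoprimek} is applicable.
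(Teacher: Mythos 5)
Your proposal is correct and is exactly the argument the paper intends: the paper gives no separate proof of Lemma \ref{oddpcong}, stating only that it follows from Lemma \ref{inductioncong} and Lemma \ref{oddpcoprimek} taken together, which is precisely your combination (the odd-$i$ branch of Lemma \ref{inductioncong} using $(k,p)=1$, and for even $i$ the reduction to counting solutions of $x^2=k \bmod p^{j-i}$, legitimate since $j>i$ gives $j-i\geq 1$). Your explicit checks that the hypotheses of Lemma \ref{oddpcoprimek} are satisfied are a welcome touch of care, but there is no substantive difference from the paper.
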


The analogue of Lemma \ref{oddpcong} for $p=2$ follows from Lemma \ref{inductioncong} and Lemma \ref{evenpcoprimek}. Since the exceptional cases $j=1$ and $j=2$ can be done by hand, we only need to consider $j-i \geq 3$.

\begin{lemma}\label{evenpcong}
	For $p=2$, $(k,2)=1$, $j\geq i+3$ and $i$ even:
	\begin{equation*}
	\#\{x\mid x^2=2^i k \bmod 2^j\}=
	\begin{cases}
	4p^{i/2} & k=1 \bmod 8\\
	0 & \text{otherwise.}
	\end{cases}
	\end{equation*}
	For $i$ odd (and all other hypotheses unchanged):
	\begin{equation*}
	\#\{x\mid x^2=2^i k \bmod 2^j\}=0.
	\end{equation*}	
\end{lemma}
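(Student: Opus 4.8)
The plan is to mirror exactly the derivation of Lemma \ref{oddpcong}, simply replacing the input Lemma \ref{oddpcoprimek} by its $p=2$ counterpart, Lemma \ref{evenpcoprimek}. Both cases of the statement fall out immediately once the hypotheses are lined up correctly, so there is no real obstacle beyond bookkeeping; the one point demanding care is checking that the condition $j\geq i+3$ is precisely what lets us invoke the $j\geq 3$ clause of Lemma \ref{evenpcoprimek}.

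First I would dispose of the case $i$ odd. Since $(k,2)=1$ and $j>i$, the second case of Lemma \ref{inductioncong} (taken with $p=2$) applies verbatim and yields
$$\#\{x\mid x^2=2^i k\bmod 2^j\}=0,$$
which is the asserted value.

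Next, for $i$ even I would apply the first case of Lemma \ref{inductioncong} to reduce the modulus:
$$\#\{x\mid x^2=2^i k\bmod 2^j\}=2^{i/2}\,\#\{x\mid x^2=k\bmod 2^{j-i}\}.$$
Here the hypothesis $j\geq i+3$ guarantees $j-i\geq 3$, so the inner count is governed by the second part of Lemma \ref{evenpcoprimek} (applied with modulus exponent $j-i$): it equals $4$ when $k\equiv 1\bmod 8$ and $0$ otherwise. Substituting then gives $4\cdot 2^{i/2}$ in the first case and $0$ in the second, matching the claimed formula (with $p=2$, so that the $4p^{i/2}$ of the statement reads $4\cdot 2^{i/2}$).

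The only thing to watch is the boundary between the hypotheses of Lemma \ref{evenpcoprimek}: that lemma distinguishes modulus $4$ (exponent $2$, governed by $k\equiv 1\bmod 4$) from modulus at least $2^3$ (governed by $k\equiv 1\bmod 8$). Insisting on $j\geq i+3$ keeps us safely in the regime $j-i\geq 3$, so that only the $k\equiv 1\bmod 8$ condition is relevant, and the small cases $j-i\in\{1,2\}$ are exactly those deferred to direct computation in the remark preceding the statement.
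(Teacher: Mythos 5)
Your proposal is correct and is exactly the argument the paper intends: the paper gives no separate proof of this lemma, stating only that it ``follows from Lemma \ref{inductioncong} and Lemma \ref{evenpcoprimek}'' with the hypothesis $j\geq i+3$ ensuring the reduced modulus exponent $j-i$ falls in the $j\geq 3$ regime of Lemma \ref{evenpcoprimek}, and your write-up fills in precisely those steps, including the correct disposal of the $i$ odd case via the second clause of Lemma \ref{inductioncong} (licit since $(k,2)=1$ and $j>i$).
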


\section{Evaluation of $\Phi(p^k,2l)$}
\label{eval}

This section is devoted to evaluating $\Phi(p^k,2l)$ for $p$ prime and $(l,p)=1$. We first evaluate $\Phi(p^k,2l)$ in the case that $p$ is an odd prime, then we proceed to the exceptional case of $p=2$. The idea of each proof is to expand $\Phi(p^k,2l)$ as a finite Fourier series. The coefficients have been calculated in Section \ref{counting}, and substituting in these expressions and simplifying yields the claimed results. In these calculations we implicitly make use of Lemma \ref{oddpcong}, Lemma \ref{evenpcong} and Lemma \ref{zeros}. We conclude this section with the proof of Proposition \ref{reflection}.

\begin{proposition}\label{phioddprimes}
	Let $p$ be an odd prime and $(l,p)=1$. Then:
	\begin{equation*}
	\Phi(p^k,2l)=
	\begin{cases}
	1 & k \text{ even, } k\geq 2 \\
	{l\legendre p}\Phi(p^k,2) & k \text{ odd.}
	\end{cases}
	\end{equation*}
\end{proposition}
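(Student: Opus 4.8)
The plan is to expand $\Phi(p^k,2l)$ as a finite Fourier sum indexed by the value of $n^2 \bmod p^k$ and to feed in the solution counts established in Section~\ref{counting}. Setting $N(a)=\#\{x \mid x^2=a \bmod p^k\}$ and collecting the $p^k$ terms of the defining sum according to the common residue $a\equiv n^2$ yields
$$\sqrt{p^k}\,\Phi(p^k,2l)=\sum_{a=0}^{p^k-1}N(a)\exp\!\left(\frac{2\pi i\, a l}{p^k}\right).$$
For $a\neq 0$ write $a=p^i u$ with $(u,p)=1$ and $0\leq i<k$. Lemma~\ref{oddpcong} then makes $N(a)=0$ for every $a$ with $i$ odd and evaluates $N(p^i u)=p^{i/2}\bigl(1+{u\legendre p}\bigr)$ for $i$ even, while Lemma~\ref{zeros} supplies $N(0)$. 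Substituting and noting that $\exp(2\pi i\, p^i u l/p^k)=\exp(2\pi i\, u l/p^{k-i})$ collapses the sum to $N(0)$ together with a sum over even $i$ of $p^{i/2}\,T_{k-i}$, where
$$T_m=\sum_{\substack{u \bmod p^m \\ (u,p)=1}}\left(1+{u\legendre p}\right)\exp\!\left(\frac{2\pi i\, u l}{p^m}\right).$$

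The main obstacle is to show that $T_m=0$ whenever $m\geq 2$; once this is in hand the rest is bookkeeping. I would split $T_m$ according to the two summands $1$ and ${u\legendre p}$. The contribution of the constant $1$ is the Ramanujan-type sum $\sum_{(u,p)=1}\exp(2\pi i\, u l/p^m)$, which vanishes for $m\geq 2$ because, with $(l,p)=1$, both the sum over all residues mod $p^m$ and the sum over the multiples of $p$ are zero. For the contribution of the Legendre symbol, I would use that ${u\legendre p}$ depends only on $u\bmod p$: fixing a residue $u_0$ modulo $p$ and summing over its lifts $u=u_0+pt$ produces the inner geometric sum $\sum_{t=0}^{p^{m-1}-1}\exp(2\pi i\, t l/p^{m-1})$, which is again zero for $m\geq 2$ since $p^{m-1}\nmid l$. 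Hence only the index $m=1$, that is $i=k-1$, can survive, and then only when $k-1$ is even.

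Now the two cases separate by the parity of $k$. If $k$ is even, then every even $i<k$ gives $k-i\geq 2$, so all the $T_{k-i}$ vanish and $\sqrt{p^k}\,\Phi(p^k,2l)=N(0)=p^{k/2}$, whence $\Phi(p^k,2l)=1$. If $k$ is odd, then $N(0)=p^{(k-1)/2}$ and the sole surviving term comes from $i=k-1$, giving $\sqrt{p^k}\,\Phi(p^k,2l)=p^{(k-1)/2}\bigl(1+T_1\bigr)$. A short substitution $u\mapsto l^{-1}u \bmod p$ shows $T_1=-1+{l\legendre p}\,g$, where $g=\sum_{u=1}^{p-1}{u\legendre p}\exp(2\pi i\, u/p)$ is independent of $l$, so $\sqrt{p^k}\,\Phi(p^k,2l)=p^{(k-1)/2}{l\legendre p}\,g$. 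The pleasant point is that $g$ need never be evaluated: specialising this identity to $l=1$ gives $\Phi(p^k,2)=p^{-1/2}g$, and dividing the two expressions yields $\Phi(p^k,2l)={l\legendre p}\,\Phi(p^k,2)$, exactly as claimed.
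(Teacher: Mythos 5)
Your proposal is correct and takes essentially the same approach as the paper: expand $\sqrt{p^k}\,\Phi(p^k,2l)$ as a finite Fourier series whose coefficients are the solution counts of Section~\ref{counting}, show that every contribution with modulus $p^{k-i}$, $k-i\geq 2$, vanishes because ${u\legendre p}$ depends only on $u \bmod p$ and the geometric sums over lifts are zero, and reduce the lone surviving $i=k-1$ term to a Legendre-symbol twist of the $l=1$ sum. Your bookkeeping by $p$-adic valuation (the sums $T_m$) and the closing specialization at $l=1$ --- which spares you from separately invoking $\sum_{m=1}^{p-1}{m\legendre p}\exp\left(2\pi i ml/p\right)=\sqrt{p}\,{l\legendre p}\,\Phi(p,2)$ and also covers $k=1$ uniformly, where the paper treats it by hand --- are minor streamlinings of the same argument.
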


\begin{proof}
	First we treat the case of $k$ even.
	\begin{align*}
	\Phi&(p^k,2l)=p^{-k/2}\sum_{n=0}^{p^k-1}{\#\{x|x^2=n\bmod p^k\}\exp{\left(\frac{2\pi i nl}{p^k}\right)}}\\
	&=p^{-k/2}\left[\sum_{n=0}^{p^k-1}{\left(1+{n\legendre p}\right)\exp{\left(\frac{2\pi inl}{p^k}\right)}}+(p^{k/2}-1)\right.\\
	&\left.-\sum_{n=1}^{p^{k-1}-1}{\exp{\left(\frac{2\pi inpl}{p^k}\right)}}+\sum_{i=2,4,\dots,k-2}^{}{\sum_{\substack{n=1\\(n,p)=p^i\\ n/p^i=m}}^{p^{k-i}-1}{\#\{x\mid x^2=mp^i \bmod p^k\}}}\right].
	\end{align*}
	We should explain each term in the last two lines: the first term gives the correct coefficients for $(n,p)=1$, the second term makes the correct contribution for $n=0$, the third term makes the $n$th coefficient $0$ for any nonzero $n$ divisible by $p$, and the final term restores the correct coefficient for these $n$. Note that the Legendre symbol ${n\legendre p}$ is defined to be $0$ if $n$ is divisible by $p$ -- this implies that ${\cdot \legendre p}$ is multiplicative.
	
	The inner sum in the last term can be simplified:
	\begin{align*}
	\sum_{\substack{n=1\\ (n,p)=p^i\\ n/p^i=m\\}}^{p^{k-i}-1}\hspace{-0.2cm}{\#\{x\mid x^2=mp^i \bmod p^k\}}&=p^{i/2}\left[\sum_{m=1}^{p^{k-i}-1}\hspace{-0.2cm}{\left(1+{m\legendre p}\right)\exp{\left(\frac{2\pi iml}{p^{k-i}}\right)}}\right.\\
	&\left.-\sum_{m=1}^{p^{k-i-1}-1}{\left(1+{mp\legendre p}\right)\exp{\left(\frac{2\pi iml}{p^{k-i-1}}\right)}}\right]
	\end{align*}
	\begin{align*}
	=p^{i/2}\sum_{m=1}^{p^{k-i}-1}{m\legendre p}\exp{\left(\frac{2\pi iml}{p^{k-i}}\right)}=p^{i/2}{l\legendre p}\sum_{m=0}^{p^{k-i}-1}{m\legendre p}\exp{\left(\frac{2\pi im}{p^{k-i}}\right)}.
	\end{align*}
	The final equality above follows from the facts that ${\cdot \legendre p}$ is multiplicative, and that $(l,p)=1$ implies that as $m$ runs from $0$ to $p^{k-i}-1$, so does $lm \bmod p^{k-i}$. But this last sum is zero, since $i\leq k-2$ implies $k-i>1$, and for $r>1$, $\sum_{m=0}^{p^{r}-1}{m\legendre p}\exp{\left(\frac{2\pi im}{p^{r}}\right)}=0$ as follows:
	\begin{align*}
	\sum_{m=0}^{p^{r}-1}{m\legendre p}\exp{\left(\frac{2\pi im}{p^{r}}\right)}&=\sum_{\alpha=0}^{p^{r-1}-1}{\sum_{n=\alpha p}^{(\alpha+1)p}{{n\legendre p}\exp{\left(\frac{2\pi in}{p^{r}}\right)}}}\\
	&=\sum_{\alpha=0}^{p^{r-1}-1}{\sum_{n=0}^{p-1}{{m+\alpha p \legendre p}\exp{\left(\frac{2\pi i(m+\alpha p)}{p^{r}}\right)}}}\\
	&=\sum_{\alpha=0}^{p^{r-1}-1}{\exp{(2\pi i\alpha p^{r-1})}}\sum_{n=0}^{p-1}{{m \legendre p}\exp{\left(\frac{2\pi im}{p^{r}}\right)}}\\
	&=0.
	\end{align*}
	Therefore, the last term in the expansion of $\Phi(p^k,2l)$ vanishes. When we expand the factor $(1+{n \legendre p})$ multiplying the first term, we find that the sum multiplied by $1$ is a geometric series, so it vanishes, and the sum multiplied ${n \legendre p}$ is zero by the calculation above. The $-1$ in the second term combines with the third term to give another geometric series, so we are left with $\Phi(p^k,2l)=1$, as promised.
	
	Now we treat odd $k$, and suppose for the moment that $k>1$. Then the coefficients are very similar, apart from the contributions for $n=0$ and \mbox{$n=mp^{k-1}$} with $(m,p)=0$. Specifically,
	\begin{align*}
	&\Phi(p^k,2l)=p^{-k/2}\left[\sum_{n=0}^{p^k-1}{\left(1+{n\legendre p}\right)\exp{\left(\frac{2\pi inl}{p^k}\right)}}+(p^{(k-1)/2}-1)\right.\\
	&\left.-\sum_{n=1}^{p^{(k-1)/2}-1}{\exp{\left(\frac{2\pi inpl}{p^k}\right)}}+\sum_{i=2,4,\dots,k-1}^{}{\sum_{\substack{n=1\\(n,p)=p^i\\n/p^i=m}}^{p^{k-i}-1}{\#\{x\mid x^2=mp^i \bmod p^k\}}}\right].
	\end{align*}
	The calculation above shows that each inner sum in the last term vanishes, except in the case $i=k-1$, in which the condition $k-i>1$ is no longer valid. So we consider this case separately:
	\begin{align*}
	\sum_{\substack{m=1\\m\neq\alpha p}}^{p-1}&{p^{(k-1)/2}(1+{m\legendre p})\exp{\left(\frac{2\pi i ml}{p}\right)}}\\
	&=p^{(k-1)/2}\left(-1+\sum_{m=1}^{p-1}{{m \legendre p}\exp{\left(\frac{2\pi iml}{p}\right)}}\right)\\
	&=p^{(k-1)/2}\left(-1+\sqrt{p}{l \legendre p}\Phi(p,2)\right).
	\end{align*}
	As with the case for $k$ even, the first term in the expansion of $\Phi(p^k,2l)$ vanishes, the $-1$ in the second term helps the third term vanish, and the last term only contributes $$p^{(k-1)/2}\left(-1+\sqrt{p}{l \legendre p}\Phi(p,2)\right),$$ so we are left with $$\Phi(p^k,2l)={l \legendre p}\Phi(p,2).$$
	
	If $k=1$, then it is clear that $\Phi(p,2l)={l \legendre p}\Phi(p,2)$ in this case too.  
\end{proof}

\begin{proposition}\label{phip=2}
	Suppose $k \geq 3$ and $(l,2)=1$. Then:
	\begin{equation*}
	\Phi(2^k,2l)=
	\begin{cases}
	\sqrt{2}\exp{\left(\frac{\pi i l}{4}\right)} & k \text{ odd}\\
	1+\exp{\left(\frac{\pi il}{2}\right)} & k \text{ even}
	\end{cases}
	\end{equation*}
\end{proposition}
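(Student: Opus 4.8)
The plan is to follow the strategy used for Proposition \ref{phioddprimes}: expand $\Phi(2^k,2l)$ as a finite Fourier series whose coefficients count square roots, substitute the counts computed in Section \ref{counting}, and observe that almost everything cancels. Concretely, I would begin by grouping the terms of $\sum_n\exp(2\pi i n^2 l/2^k)$ according to the value of $n^2\bmod 2^k$, giving
$$\Phi(2^k,2l)=2^{-k/2}\sum_{m=0}^{2^k-1}\#\{x\mid x^2=m\bmod 2^k\}\exp\left(\frac{2\pi i m l}{2^k}\right).$$
The $m=0$ term contributes $2^{\lfloor k/2\rfloor}$ by Lemma \ref{zeros}. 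For $m\neq 0$ I would write $m=2^i u$ with $u$ odd; by Lemma \ref{inductioncong} odd $i$ contributes nothing, while for even $i$ the count factors as $2^{i/2}\#\{x\mid x^2=u\bmod 2^{k-i}\}$. Setting $j=k-i$, the whole expression becomes
$$2^{k/2}\Phi(2^k,2l)=2^{\lfloor k/2\rfloor}+\sum_{\substack{i\ \mathrm{even}\\ 0\leq i\leq k-1}}2^{i/2}T_{k-i},\qquad T_j:=\sum_{\substack{u\ \mathrm{odd}\\ 1\leq u\leq 2^j-1}}\#\{x\mid x^2=u\bmod 2^j\}\exp\left(\frac{2\pi i u l}{2^j}\right).$$

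The heart of the computation is the evaluation of $T_j$. For $j\geq 3$ the second part of Lemma \ref{evenpcoprimek} gives a count of $4$ exactly when $u\equiv 1\bmod 8$, so writing $u=1+8t$ yields $T_j=4\exp(2\pi i l/2^j)\sum_{t=0}^{2^{j-3}-1}\exp(2\pi i t l/2^{j-3})$. Here is the key point: because $(l,2)=1$, this inner geometric sum vanishes whenever $j\geq 4$, exactly as the analogous sums vanished in the odd-prime case, but it \emph{degenerates to the single term $t=0$} when $j=3$, leaving $T_3=4\exp(\pi i l/4)$ --- and this surviving phase is precisely what produces the factor resembling $\sqrt i$. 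The two small cases are handled directly: the first part of Lemma \ref{evenpcoprimek} gives $T_2=2\exp(\pi i l/2)$, and a one-line check by hand gives $T_1=\exp(\pi i l)=-1$ since $l$ is odd.

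Finally I would assemble the pieces by the parity of $k$, which controls which of $T_1,T_2,T_3$ can occur, since $j=k-i$ shares the parity of $k$ (as $i$ is even) and all $T_j$ with $j\geq 4$ vanish. For $k$ odd the surviving indices are $j=1$ and $j=3$: the $m=0$ contribution $2^{(k-1)/2}$ cancels against $2^{(k-1)/2}T_1$, and $2^{(k-3)/2}T_3$ supplies $\sqrt{2}\exp(\pi i l/4)$ after dividing by $2^{k/2}$. For $k$ even only $j=2$ survives, and $2^{k/2}+2^{(k-2)/2}T_2$ yields $1+\exp(\pi i l/2)$.

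The main obstacle is not any single hard estimate but the bookkeeping at the boundary: one must peel off the exceptional small moduli $j\in\{1,2,3\}$ --- precisely the range where the clean statement of Lemma \ref{evenpcong} no longer applies and where Lemma \ref{evenpcoprimek} and a direct check take over --- and correctly correlate the parity of $k$ with the admissible values of $j$. A single parity or sign slip in matching $i=k-1,k-2,k-3$ to the nonzero $T_j$ would collapse the cancellation, so the care lies in verifying that exactly the intended boundary term survives in each parity case.
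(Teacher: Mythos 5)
Your proposal is correct and takes essentially the same approach as the paper: both expand $\Phi(2^k,2l)$ as a finite Fourier series, substitute the solution counts from Lemmas \ref{inductioncong}, \ref{zeros} and \ref{evenpcoprimek}, and eliminate all but the boundary contributions via vanishing geometric series, with exactly the surviving terms ($j=1,3$ for $k$ odd, $j=2$ for $k$ even) that the paper isolates. Your uniform $T_j$ bookkeeping is a minor organizational improvement: it absorbs the paper's separate case $k=3$ automatically, since there $T_3$ is simply the $i=0$ term of the same sum.
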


\begin{proof}
	As before, for $k\geq 3$:
	\begin{align*}
	&\Phi(p^k,2l)=p^{-k/2}\sum_{n=0}^{p^k-1}{\#\{x \mid x^2=n \bmod p^k\}\exp{\left(\frac{2\pi inl}{p^k}\right)}}\\
	&=p^{-k/2}\left[\sum_{n=1 \bmod 8}^{p^k-1}{\#\{x \mid x^2=n \bmod p^k\}\exp{\left(\frac{2\pi inl}{p^k}\right)}}\right.\\
	&\left.+\sum_{i=1}^{k-1}{\sum_{(n,p)=p^i, n/p^i=m}^{~}\#\{x \mid x^2=mp^i \bmod p^k\}\exp{\left(\frac{2\pi imlp^{i}}{p^k}\right)}}+N\right],
	\end{align*}
	where $N=p^{k/2}$ if $k$ is even, and $N=p^{(k-1)/2}$ if $k$ is odd.
	
	Suppose $k \geq 4$ is even. Then:
	\begin{align*}
	\Phi(p^k,2l)&=p^{-k/2}\left[\sum_{n=1 \bmod 8}^{p^{k}-1}{4\exp{\left(\frac{2\pi inl}{p^k}\right)}}+p^{k/2}\right.\\
	&\left.+p^{k/2}\exp{\left(\frac{\pi i l}{2}\right)}+\sum_{i=2,4,\dots,k-4}^{~}{\sum_{n=1 \bmod 8}^{p^{k-i}-1}{4p^{i/2}\exp{\left(\frac{2\pi inlp^{i}}{p^k}\right)}}}\right]
	\end{align*}
	\vspace{-0.5cm}
	\begin{align*}
	=p^{-k/2}&\left[\sum_{\alpha=0}^{p^{k-3}-1}{4\exp{\left(\frac{2\pi i(1+p^3\alpha)l}{p^k}\right)}}+p^{k/2}+p^{k/2}\exp{\left(\frac{\pi il}{2}\right)}\right.\\
	&\left.+\sum_{i=2,4,\dots,k-4}^{~}{\sum_{\alpha=0}^{p^{k-i-3}-1}{4p^{i/2}\exp{\left(\frac{2\pi il(1+p^3\alpha)}{p^{k-i}}\right)}}}\right].
	\end{align*}
	Since $k\geq4$, and the term $i=k-2$ has been treated separately (and appears as the third term in the sum), we have $p^{k-i-3}-1>0$ for $i=2,4,\dots,k-4$, so the final sum is a geometric series and vanishes. Similarly, $k\geq4$ implies that the first sum vanishes too. Therefore $\Phi(2^k,2l)=1+\exp{\left(\frac{\pi i l}{2}\right)}$.\\
	Now suppose $k > 3$ is odd. This time, the term corresponding to $i=k-1$ appears separately as the third term in the brackets, and the term corresponding to $i=k-3$ appears as the fourth term.
	\begin{align*}
	\Phi(p^k,2l)&=p^{-k/2}\left[\sum_{\alpha=0}^{p^{k-3}-1}{4\exp{\left(\frac{2\pi i(1+p^3\alpha)l}{p^k}\right)}}+p^{(k-1)/2}\right.\\
	&\left.+p^{(k-1)/2}\exp{(\pi i l)}+4p^{(k-3)/2}\exp{\left(\frac{2\pi il}{8}\right)}\right.\\
	&\left.+\sum_{i=2,4,\dots,k-5}^{~}{\sum_{\alpha=0}^{p^{k-i-3}-1}{4p^{i/2}\exp{\left(\frac{2\pi il(1+p^3\alpha)}{p^{k-i}}\right)}}}\right].
	\end{align*}
	Then since $p^{k-i-3}-1>0$ for $i=2,4,\dots,k-5$, the final sum vanishes, as does the first sum, so we are left with:
	$$\Phi(2^k,2l)=p^{-1/2}\left(1+{(-1)}^{l}+2\exp{\left(\frac{\pi i l}{4}\right)}\right)=\sqrt{2}\exp{\left(\frac{\pi i l}{4}\right)}.$$
	Lastly, suppose $k=3$. Then compared to the case $k>3$ above, the extra term for $i=k-3$ is omitted, since this is the case $i=0$ which is already accounted for by the first term. For ease of comparison we write this expression out before explicitly setting $k=3$:
	\begin{align*}
	\Phi(p^k,2l)=p^{-k/2}&\left[\sum_{\alpha=0}^{p^{k-3}-1}{4\exp{\left(\frac{2\pi i(1+p^3\alpha)l}{p^k}\right)}}\right.\\
	&\left.+p^{(k-1)/2}+p^{(k-1)/2}\exp{(\pi i l)}\phantom{\sum_{\alpha=0}^{p^{k-3}-1}}\hspace{-30pt}\right].
	\end{align*}
	Now we set $k=3$ in the expression above, and simplify:
	\begin{align*}
	\Phi(p^k,2l)&=2^{-3/2}\left(4\exp{\left(\frac{\pi i l}{4}\right)}+2+2(-1)^{l}\right)=\sqrt{2}\exp{\left(\frac{\pi i l}{4}\right)}. 
	\end{align*}  
\end{proof}

Finally, we can prove Proposition \ref{reflection}:
Let $p$ be an odd prime, $k\geq1$, and $(p,l)=1$:
\begin{align*}
\Phi(p^k,2l)\Phi(p^k,-2l)&={l\legendre p}{-l\legendre p}{\left(\Phi(p^k,2)\right)}^2 \text{ (by Proposition \ref{phioddprimes})}\\
&={-1\legendre p}{{l\legendre p}}^2{-1\legendre p} \text{ (by Lemma \ref{Gauss})}\\
&=1.
\end{align*}
Let $p=2$ and suppose $k\geq3$, $(2,l)=1$. By Proposition \ref{phip=2}:
\begin{align*}
\Phi(2^k,2l)\Phi(2^k,-2l)&=
\begin{cases}
\left(1+\exp{\left(\frac{\pi il}{2}\right)}\right)\left(1+\exp{\left(-\frac{\pi il}{2}\right)}\right) & k \text{ even}\\
2\exp{\left(\frac{\pi i l}{4}\right)}\exp{\left(-\frac{\pi i l}{4}\right)} & k \text{ odd}
\end{cases}\\
&=2.
\end{align*}

\section{Induction}
\label{induction}

By Lemma \ref{Gauss}, the Landsberg-Schaar relation holds for $b=1$. We proceed by induction on the number of distinct prime factors of $b$. We assume that $\Phi(a,2b)=\sqrt{i}\Phi(2b,-a)$ for all $b$ with less than $n$ prime factors, and prove, using Proposition \ref{reflection}, that $\Phi(a,2bp^k)=\sqrt{i}\Phi(2bp^k,-a)$ for all primes $p$. We may assume that $(b,p)=1$, and also $(a,p)=1$ by Lemma \ref{kfactor}. As usual, the case for $p$ an odd prime is treated first.
\begin{align*}
\Phi(a,2bp^k)&=\frac{\Phi(p^ka,2b)}{\Phi(p^k,2ab)} &\text{(by Lemma \ref{multiplication})}\\
&=\frac{\sqrt{i}\Phi(2b,-p^ka)}{\Phi(p^k,2ab)} &\\
&=\frac{\sqrt{i}\Phi(2bp^k,-a)}{\Phi(p^k,2ab)\Phi(p^k,-2ab)} &\text{(by Lemma \ref{multiplication})}\\
&=\sqrt{i}\Phi(2bp^k,-a). &\text{(by Proposition \ref{reflection})}
\end{align*}

Now for $p=2$:
\begin{align*}
\Phi(a,2b.2^k)&=\Phi(a,2^{k+1}b)=\frac{\Phi(2^{k+1}a,b)}{\Phi(2^{k+1},ab)} &\text{(by Lemma \ref{multiplication})}\\
&=\frac{\Phi(2^{k+2}a,2b)}{\Phi(2^{k+2},2ab)} &\text{(by Lemma \ref{kfactor})}\\
&=\frac{\sqrt{i}\Phi(2b,-2^{k+2}a)}{\Phi(2^{k+2},2ab)} &\\
&=\frac{\sqrt{2}\sqrt{i}\Phi(b,-2^{k+1}a)}{\Phi(2^{k+2},2ab)} &\text{(by Lemma \ref{kfactor})}\\
&=\frac{\sqrt{2}\sqrt{i}\Phi(2^{k+1}b,-a)}{\Phi(2^{k+2},2ab)\Phi(2^{k+1},-ab)} &\text{(by Lemma \ref{multiplication})}\\
&=\frac{\sqrt{i}\Phi(2b.2^k,-a)}{\frac{1}{2}\Phi(2^{k+2},2ab)\Phi(2^{k+2},-2ab)} &\text{(by Lemma \ref{kfactor})}\\
&=\sqrt{i}\Phi(2b.2^k,-a). &\text{(by Proposition \ref{reflection})}
\end{align*}

\end{document}